\def\Ddots{\mathinner{\mkern1mu\raise\p@
\vbox{\kern7\p@\hbox{.}}\mkern2mu
\raise4\p@\hbox{.}\mkern2mu\raise7\p@\hbox{.}\mkern1mu}}
\titleformat*{\subsection}{\Large\bfseries}
\titleformat*{\subsubsection}{\large\bfseries}
\titleformat*{\paragraph}{\large\bfseries}
\titleformat*{\subparagraph}{\large\bfseries}
\theoremstyle{Theorem}
\newtheorem{thm}{Theorem}[section]
\newtheorem{con}[thm]{Conjecture}
\theoremstyle{definition}
\newcommand{\N}{\mathbb{N}}
\newcommand{\Z}{\mathbb{Z}}
\newcommand{\F}{\mathcal{F}}
\date{\vspace{-5ex}}
\begin{document}

\title{{\bf Restricted van der Waerden theorem for nilprogressions}}
\author{ 
Sayan Goswami\\  \textit{ sayan92m@gmail.com}\footnote{Ramakrishna Mission Vivekananda Educational and Research Institute, Belur Math,
Howrah, West Benagal-711202, India.}
}


\makeatother

\providecommand{\corollaryname}{Corollary}
\providecommand{\definitionname}{Definition}
\providecommand{\examplename}{Example}
\providecommand{\factname}{Fact}
\providecommand{\lemmaname}{Lemma}
\providecommand{\propositionname}{Proposition}
\providecommand{\remarkname}{Remark}
\providecommand{\theoremname}{Theorem}

\maketitle
\begin{abstract}
In $[Adv.\, Math.,\, 321\, (2017)\, 269–286]$, using the theory of ultrafilters, J. H. Johnson Jr., and F. K. Richter proved the nilpotent polynomial Hales-Jewett theorem. Using this result they proved the restricted version of the van der Waerden theorem for nilprogressions of rank $2$ and conjectured that this result must hold for arbitrary rank. In this article, we give an affirmative answer to their conjecture.
 

\end{abstract}
{\bf Mathematics subject classification 2020:} 05D10, 05C55.\\
{\bf Keywords:} Restricted van der Waerden theorem, Nilpotent polynomial Hales-Jewett theorem.

\section{Introduction}
Arithmetic Ramsey theory deals with the monochromatic patterns found in any given finite coloring of the
integers or of the natural numbers $\N$. Here, ``coloring” means disjoint partition, and a set is called ``monochromatic” if it is included in one piece of the partition. Let $\F$ be a family of finite subsets of $\N.$ If for every finite coloring of $\N$, there exists a monochromatic member of $\F$, then such a family $\F$ is called a partition regular family. So basically, Ramsey theory is the study of the classifications of partitioned regular families. Arguably one of the first substantial developments in this area of research was due to Van der Waerden in $1927$ when he proved the following theorem.
\begin{thm}[\textbf{Van der Waerden theorem}, \cite{vdw}]\label{ Van der Waerden}
For any finite coloring of the natural numbers one always finds arbitrarily long monochromatic arithmetic progressions. In other words, the set of all arithmetic progressions of finite length is partitioned regularly.
\end{thm}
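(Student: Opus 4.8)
The plan is to prove the finite, quantitative form of the statement and then read off the version given. Concretely, I will show that for all $k, r \in \N$ there is a number $W(k,r) \in \N$ such that every coloring of $\{1, 2, \dots, W(k,r)\}$ with $r$ colors contains a monochromatic arithmetic progression of length $k$; the theorem as phrased follows at once, since an $r$-coloring of $\N$ restricts to one of $\{1, \dots, W(k,r)\}$ for every target length $k$. The existence of $W(k,r)$ is proved by induction on $k$. The cases $k = 1$ and $k = 2$ are immediate (for $k = 2$, color $\{1, \dots, r+1\}$ and apply the pigeonhole principle), so I fix $k \ge 3$ and assume $W(k-1, r)$ exists for every $r \in \N$.

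The mechanism is \emph{color-focusing}. Given a coloring, call arithmetic progressions $P_1, \dots, P_s$ of length $k-1$, say $P_i = \{a_i, a_i + d_i, \dots, a_i + (k-2) d_i\}$ with each $d_i \ge 1$, \emph{focused at} a point $f$ if $a_i + (k-1) d_i = f$ for every $i$, and \emph{color-focused} if in addition each $P_i$ is monochromatic and the colors occurring on $P_1, \dots, P_s$ are pairwise distinct. The key observation is: if an $r$-coloring of an interval $I$ admits $r$ color-focused progressions of length $k-1$ with all of $P_1, \dots, P_r$ and their common focus $f$ inside $I$, then $I$ contains a monochromatic progression of length $k$, because $f$ receives one of the $r$ colors, hence the color of some $P_i$, and then $P_i \cup \{f\}$ is monochromatic of length $k$.

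It remains to force color-focused progressions to appear, which I do by a secondary induction: for each $s$ with $1 \le s \le r$ there is $N_s \in \N$ such that every $r$-coloring of an interval of length $N_s$ either contains a monochromatic progression of length $k$ or contains $s$ color-focused progressions of length $k-1$ lying, together with their focus, inside the interval. Setting $W(k,r) := N_r$ then finishes the argument, by the previous paragraph. For $s = 1$ this is just the outer hypothesis $W(k-1, r)$, padded so that the focus stays in range. For the step from $s$ to $s+1$, take an interval long enough to be cut into $W(k-1, r^{N_s})$ consecutive blocks of length $N_s$, with some room to spare; color each block by its complete color pattern, an $r^{N_s}$-coloring of the blocks, and invoke $W(k-1, r^{N_s})$ to obtain $k-1$ identically patterned blocks in arithmetic progression whose ``focus block'' is still inside the interval (this is where the spare room is used). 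Inside the first of these blocks there are, by hypothesis, $s$ color-focused progressions $P_1, \dots, P_s$ of length $k-1$ focused at a point $f$ of that block. Now dilate: replacing the common difference of each $P_i$ by that difference plus the step of the block-progression (as a shift of positions) and carrying the configuration through the patterned blocks turns $P_i$ into a monochromatic progression $Q_i$ of length $k-1$, of the same color as $P_i$, focused at the copy $F$ of $f$ in the focus block; and the progression $Q_{s+1}$ made of the copies of $f$ in the $k-1$ patterned blocks is monochromatic and also focused at $F$. If the color of $Q_{s+1}$ --- that is, the color of $f$ --- equalled the color of some $P_i$, then $P_i \cup \{f\}$ would already be a monochromatic progression of length $k$; otherwise $Q_1, \dots, Q_{s+1}$ are $s+1$ color-focused progressions of length $k-1$, completing the step.

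The only genuine work is the bookkeeping with interval lengths: one has to arrange that after the dilation all the $Q_i$ truly share the single focus $F$, that $F$ does not overshoot the colored interval, and that the color appearing on $Q_{s+1}$ is forced to be new. This is elementary but is responsible for the very fast growth of $W(k,r)$. I note finally that the same theorem is reachable by other routes --- deducing it from the Hales-Jewett theorem, the Furstenberg-Weiss topological-dynamics argument, or the proof via idempotent ultrafilters in $\bN$ that is closest in spirit to the present paper --- but the color-focusing argument above is the most self-contained.
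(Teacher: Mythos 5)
Your argument is correct: it is the classical color-focusing (double induction) proof of van der Waerden's theorem, and the one step you leave as "bookkeeping" --- choosing $N_{s+1}$ large enough that the progression of identically patterned blocks, its focus block, and the dilated focus $F$ all stay inside the colored interval --- is genuinely routine and standard. The one point worth comparing is that the paper does not prove this statement at all: Theorem \ref{ Van der Waerden} is quoted as classical background with a citation to \cite{vdw}, and the paper's actual content concerns a restricted analogue for nilprogressions proved via the nilpotent polynomial Hales--Jewett theorem and the algebra of $\bN$. So there is no "paper proof" to match; your combinatorial argument is self-contained and elementary, which is a virtue, whereas a proof via idempotent ultrafilters (or via Hales--Jewett), which you mention at the end, would be the route closest in spirit to the techniques the paper relies on from \cite{adv} and \cite{hs}. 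Either is acceptable; your write-up as it stands establishes the theorem.
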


In \cite{26}, Spencer (independently in \cite{23}, by Ne\v{s}et\v{r}il, and V. R\"{o}dl) proved a restricted version of the Van der Waerden theorem:  there exists a set $V\subset \N$ containing no $k+1$ term arithmetic progressions and such that for any partition of $V$ into finitely many classes, some class must contain a $k$-term arithmetic progression. To address the conjecture for nilprogressions we need to recall some notions.  For $k\geq 2,$ define  $\sum_{<k}$  to be the collection of all words $w(*_1,*_2,\cdots ,*_d)$ in the letters $*_1,*_2,\cdots ,*_d$ such that every letter $*_i$ appears at most $(k-1)$ times. For any given word $w(*_1,*_2,\cdots ,*_d)$ and elements $x_1,x_2,\cdots ,x_d$ in a group $(G,\cdot ),$ let $w(x_1,x_2,\cdots ,x_d)$ 
 be  the group element of $G$ obtained by replacing all occurrences of the variable $*_i$ in the word $w(*_1,*_2,\cdots ,*_d)$ with $x_i.$  A \textbf{nilprogression of step $s$, length $k$ and rank $d$} is a set of the form  
 $A=\left\lbrace w(x_1,x_2,\cdots ,x_d)a:w\in \sum_{<k+1}\right\rbrace,$
 where $a,x_1,x_2,\cdots ,x_d$  are elements in an $s$-step nilpotent group $G$. If $|A|=|\sum_{<k+1}|$, then $A$ is called \textbf{non-degenerated nilprogression.} In \cite{adv} Johnson Jr. and Richter posed the following conjecture.

 \begin{con}[\textbf{Restricted van der Waerden theorem for nilprogressions}]\label{c}
 For every $k\geq 1,$ and $d\geq 2$ there exists a $k$-step nilpotent group $(G,\cdot )$ with $d$ generators and a set $V\subset G$ with the property that $V$ does not contain any non-degenerated nilprogressions of step $k$, length $k+1$ and rank $d$ but for any partition of $V$ into finitely many classes, some class contains a non-degenerated nilprogressions of step $k$, length k and rank $d$.
\end{con}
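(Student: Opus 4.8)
The plan is to derive the conjecture from the nilpotent polynomial Hales--Jewett theorem of Johnson Jr.\ and Richter \cite{adv}, imitating the way in which the classical restricted van der Waerden theorem of Spencer and of Ne\v{s}et\v{r}il--R\"{o}dl is extracted from the ordinary Hales--Jewett theorem, but performed inside a free nilpotent group instead of inside $\N$. Fix $k\geq 1$ and $d\geq 2$, and let $G$ be the free $k$-step nilpotent group on $d$ generators $g_{1},\dots ,g_{d}$; any group meeting the hypotheses of Conjecture~\ref{c} would do, and this most generic choice makes the positive half of the statement easiest and the negative half hardest. Fix a large integer $B$, put $y_{i,j}=g_{i}^{B^{j}}$ for $1\leq i\leq d$ and $j\geq 0$, and let $V$ be the set of all elements of $G$ obtained by evaluating words of $\sum_{<k+1}$ on the $y_{i,j}$, organised into an increasing sequence of finite combinatorial boxes $V_{0}\subseteq V_{1}\subseteq\cdots$ with $V=\bigcup_{m}V_{m}$, where $V_{m}$ is taken large enough to deal with $m$ colours. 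The point of the widely separated scales $B^{j}$ is that every element of $V$ then has Mal'cev coordinates confined to a controlled range of ``digits''.

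For the positive half---every finite colouring of $V$ contains a monochromatic non-degenerated nilprogression of step $k$, length $k$ and rank $d$---I would argue as follows. Given an $r$-colouring of $V$, restrict it to a box $V_{m}$ with $m$ large enough (depending only on $r,k,d$) that the nilpotent polynomial Hales--Jewett theorem applies; this produces a monochromatic combinatorial nilprogression, that is, a monochromatic family of evaluations obtained from a fixed word template by substituting a common wildcard. One then checks that, under the specialisation $y_{i,j}\mapsto g_{i}^{B^{j}}$, such a combinatorial nilprogression becomes an honest nilprogression $\left\{w(x_{1},\dots ,x_{d})a:w\in\sum_{<k+1}\right\}$ in $G$, and that---because the substituted elements can be arranged to be sufficiently independent inside the free group---it is non-degenerated. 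The first part is a direct application of the cited theorem; the second is a bookkeeping verification that the template produced by it really has the form demanded by the definition of a rank-$d$, step-$k$, length-$k$ nilprogression.

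The real content, and the step I expect to be the main obstacle, is the negative half: that $V$ contains no non-degenerated nilprogression of step $k$, length $k+1$ and rank $d$. This is the non-commutative counterpart of the elementary fact that a set of natural numbers whose base-$B$ digits all avoid a prescribed value contains no arithmetic progression of a prescribed length. Suppose $A=\left\{w(x_{1},\dots ,x_{d})a:w\in\sum_{<k+2}\right\}\subseteq V$ with $\left|A\right|=\left|\sum_{<k+2}\right|$. A growth estimate on Mal'cev coordinates shows first that $A$ cannot meet two distinct boxes $V_{m}$---an element of $A$ built from a large-scale $x_{i}$ would force other elements of $A$ outside $V$---so $A$ lies inside a single box, where one works in Mal'cev coordinates adapted to the basic commutators in $g_{1},\dots ,g_{d}$. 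Writing each element of $A$ in these coordinates, the restriction that the letters of $w$ are drawn from $\sum_{<k+2}$, together with the separation of the scales $B^{j}$, forces the coordinates of the $x_{i}$ and of $a$ to satisfy carry-free digit relations of exactly the kind appearing in the abelian case, and these relations are consistent with a full length-$(k+1)$ nilprogression only when it is degenerate. Carrying this digit analysis through the entire lower central series of the free $k$-step nilpotent group---rather than through a single coordinate, as in Spencer's original argument, or through the two-step situation treated in \cite{adv}---is precisely the ingredient that is missing beyond rank $2$, and where essentially all of the estimates will be spent. Combining the two halves with the disjointness of the boxes then yields a single set $V\subseteq G$ with the properties required by Conjecture~\ref{c}.
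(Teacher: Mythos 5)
Your proposal and the paper diverge at the very first choice, and the divergence is what creates the gap. The conjecture only asks for the \emph{existence} of some $k$-step nilpotent group with $d$ generators in which the construction works, and the paper exploits this freedom aggressively: it takes $G$ to be generated by $d/2$ commuting pairs $(R_i,S_i)$ acting coordinatewise on $\Z[x]^{d/2}$ (each pair being a copy of the rank-$2$ group already analysed in \cite{adv}), so that every rank-$d$ word factors as a product $w^1(R_1,S_1)\cdots w^{d/2}(R_{d/2},S_{d/2})$ of rank-$2$ words living in commuting factors. Both the existence of non-degenerated length-$k$ nilprogressions and the absence of non-degenerated length-$(k+1)$ ones then reduce, coordinate by coordinate, to the rank-$2$ statements proved in \cite{adv}; no new commutator or coordinate analysis is needed. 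You instead pick the free $k$-step nilpotent group on $d$ generators --- by your own description the choice that makes ``the negative half hardest'' --- and then the negative half is exactly where your argument stops being a proof.

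Concretely: your entire treatment of the claim that $V$ contains no non-degenerated nilprogression of length $k+1$ is a programme, not an argument. You say that a ``digit analysis through the entire lower central series'' in Mal'cev coordinates should force degeneracy, and you explicitly identify this as ``precisely the ingredient that is missing beyond rank $2$'' where ``essentially all of the estimates will be spent''---but you do not supply those estimates. In the free $k$-step nilpotent group on $d\geq 3$ generators the basic commutators proliferate, the Mal'cev coordinates of a word evaluation involve genuinely multilinear expressions in the exponents $B^{j}$, and it is not at all automatic that the carry-free digit relations you invoke behave as in the abelian or two-generator case; this is the hard new content the conjecture was asking for, and it is the content the paper's cartesian-product construction is specifically designed to avoid having to produce. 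The non-degeneracy check in your positive half has the same character: you assert that the substituted elements ``can be arranged to be sufficiently independent,'' which in the free group again requires exactly the kind of coordinate computation you have deferred. Until the negative half is actually carried out, the proposal does not establish the conjecture; if you want a complete proof along the lines the paper intends, the productive move is to give up the free group and build $G$ out of commuting copies of the rank-$2$ example so that everything reduces to \cite[Theorem B]{adv}.
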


In \cite[Theorem B]{adv}, they proved this conjecture for $d=2$ using the Nilpotent polynomial Hales-Jewett theorem (\cite[Theorem A]{adv}), which was a conjecture of  Bergelson, and Leibman \cite{bl3}. In this article, we construct a $k$ step nilpotent group with $d$ generators, where we can explicitly calculate the evaluations of all words and then proceeding similarly to their proofs one can obtain the desired result. To avoid unnecessary technical difficulties we omit the entire literature on the Polynomial Hales-Jewett theorem (see \cite{bl2, adv}) and the algebra of Stone-\v{C}ech compactification (see \cite{hs}). That's why we also omit the full proof, but rather we limit ourselves to construct our required groups, and then one may adapt the technique of the rest of the proof as in the proof of \cite[Theorem B]{adv}.

\section{Our proof}
For a long time, the cartesian product trick has been proven to be a very useful technique to estimate bounds in additive combinatorics (see \cite{tao}). In our proof, we adapt this trick to construct our desired group.

\begin{proof}[Proof of Conjecture \ref{c}:] We divide the proof into two cases where $d$ is even or odd.\\
\vspace{.1 cm}

   \noindent \underline{\textbf{$d$ is even:}}  Let $\Z[x]$ denote the collection of all polynomials with integer coefficients. For each $i\in \{1,\cdots ,d/2\},$ define $R_i,S_i:\Z[x]^{d/2}\rightarrow \Z[x]^{d/2}$ as

\begin{enumerate}
    \item $R_i(P_1(x),\cdots ,P_{d/2}(x))=(P_1(x),\cdots ,P_{i-1}(x),P_i(x)+x^k,P_{i+1}(x),\cdots ,P_{d/2}(x))$
    \item $S_i(P_1(x),\cdots ,P_{d/2}(x))=(P_1(x),\cdots ,P_{i-1}(x),P_i(x+1),P_{i+1}(x),\cdots ,P_{d/2}(x))$
\end{enumerate}
   Let $G$ be the group generated by $\{R_i,S_i:1\leq i \leq d/2\}.$ For each $i\in \{1,2,\cdots ,d/2\},$ let $G_i$ be the group generated by $R_i$ and $S_i.$ Let $R,S:\Z[x]\rightarrow \Z[x]$ be the map defined by $R(p(x))=p(x)+x^k$ and $S(p(x))=p(x+1).$ As $R_i$ and $S_i$ are two maps which are nothing but $R$ and $S$ resp. on the $i^{th}$ coordinate and are identity over the rest of the coordinates, we have $G_i$ is isomorphic to the group generated by $R$ and $S.$ But the group generated by $R$ and $S$ is a $k$-step nilpotent group\footnote{this is the exactly same group that was considered in the proof of \cite[Theorem B]{adv}.}, and so our $G_i's$ are.
   Note if $i\neq j,$ $R_i,S_i$ commutes with $R_j,S_j,$ and as each $G_i$ is a $k$-step nilpotent group,
   $G$ is a $k$-step nilpotent group.

   Let $\sum_{<k+1}$ denote the collection of all words in the letters $*_1,\cdots ,*_d,$ where each $*_i$ appears at most $k$ times. Our claim is that for all $w_1,w_2\in \sum_{<k+1},$ if $w_1(R_1,S_1,\cdots ,R_{d/2},S_{d/2})=w_2(R_1,S_1,\cdots ,R_{d/2},S_{d/2})$ then $w_1(*_1,\cdots ,*_d)=w_2(*_1,\cdots ,*_d).$
   Note that for any $w\in \sum_{<k+1},$ one can write $w(R_1,S_1,\cdots ,R_{d/2},S_{d/2})=w^1(R_1,S_1)\cdots w^{d/2}(R_{d/2},S_{d/2})$ where each $w^i$ is a a word in $G_i$ with length $k$, rank $2.$

   Now $w_1(R_1,S_1,\cdots ,R_{d/2},S_{d/2})(x^k,\cdots ,x^k)=w_2(R_1,S_1,\cdots ,R_{d/2},S_{d/2})(x^k,\cdots ,x^k)$ implies $$(w_1^1(R_1,S_1)x^k,\cdots ,w_1^{d/2}(R_{d/2},S_{d/2})x^k)=(w_1^1(R_1,S_1)x^k,\cdots ,w_2^{d/2}(R_{d/2},S_{d/2})x^k).$$

   But then in light of the proof of \cite[Theorem B]{adv}, we have $w_1=w_2.$ This shows that $G$ admits non-degenerated nilprogressions of length $k$ and rank $d.$ As each rank $d$ word in $G$ is a product of rank $2$ words of $G_i's$ (at least one of them $G_1$), and as in \cite[Theorem B]{adv}, it has been proven that they do not admit non-degenerated nilprogressions of length $(k+1)$ and rank $2$, our group does not contain nilprogressions of length $(k+1)$ and rank $2$.

   Now the rest of the proof is similar to the proof of \cite[Theorem B]{adv}, so we leave it to the reader.

   \vspace{.2 cm}

   \noindent \underline{\textbf{$d$ is odd:}}  Whenever $d$ is odd, we need a little modification. For each $i\in \{1,\cdots ,(d-1)/2\},$ define $R_i,S_i,R_d:\Z[x]^{(d+1)/2}\rightarrow \Z[x]^{(d+1)/2}$ as

\begin{enumerate}
    \item $R_i(P_1(x),\cdots ,P_{(d+1)/2}(x))=(P_1(x),\cdots ,P_{i-1}(x),P_i(x)+x^k,P_{i+1}(x),\cdots ,P_{(d+1)/2}(x))$
    \item $S_i(P_1(x),\cdots ,P_{(d+1)/2}(x))=(P_1(x),\cdots ,P_{i-1}(x),P_i(x+1),P_{i+1}(x),\cdots ,P_{(d+1)/2}(x))$
    \item $R_d(P_1(x),\cdots ,P_{(d+1)/2}(x))=(P_1(x),\cdots ,P_{(d+1)/2}(x)+x^k).$
\end{enumerate}

Now consider the group $G$ generated by $\{R_i,S_i:1\leq i \leq (d-1)/2\}\cup \{R_d\}$. We agree that the group generated by $R_d$ is commutative, but the other $R_i,S_i's$ generates $k$-step nilpotent groups. Now proceed similarly to the above proof.

\end{proof}

\section*{Acknowledgement} The author is supported by NBHM postdoctoral fellowship with reference no: 0204/27/(27)/2023/R \& D-II/11927. We are thankful to F. K. Richter for his comments on the previous draft of this article.

\end{document}